\theoremstyle{definition}
\theoremstyle{definition}
\theoremstyle{remark}
\newtheorem{thm}{\textbf{Theorem}}[section]
\newtheorem{de}[thm]{\textbf{Definition}}
\newtheorem{ex}[thm]{\textbf{Example}}
\newtheorem{rem}[thm]{\textbf{Remark}}
\newtheorem{co}[thm]{\textbf{Corollary}}
\numberwithin{equation}{section}
\title[Approximation of solution set of a variational inequality ]{ Approximation of solution set of a variational inequality   for $(u, v)$-cocoercive   mappings in Banach spaces}
\author{ Ebrahim  Soori }
 \thanks{ \!\!\!\!\!\!\!\! \!\!2010 Mathematics Subject Classification: 47H09; 47H10. \\ E-mail address: sori.e@lu.ac.ir; sori.ebrahim@yahoo.com.}
\begin{document}
\begin{large}

%------------------------------------------------------------------------------------%

\maketitle

%------------------------------------------------------------------------------------%
%This part will be filled in by BIMS

%------------------------------------------------------------------------------------%
%\end{center}
%Department of Mathematics, University of Lorestan, Khoramabad, Iran
 \begin{center}
 \begin{normalsize}
    Department  of Mathematics, Lorestan University, Lorestan, Khoramabad, Iran.
 \end{normalsize}
 \end{center}
\begin{abstract}

\begin{normalsize}
Let $C$ be a nonempty closed   convex subset of a  real normed linear  space $E$ and $u, v$ are positive numbers. In this paper we introduce  some new definitions that   generalize the analogue   definitions from real Hilbert spaces to  real normed linear spaces.  Indeed, we generalize $(u, v)$-cocoercive mappings and $v$-strongly monotone mappings  and  $V I (C, B)$ for a   mapping $B$, from real  Hilbert spaces to  real normed linear spaces. Then we      prove that  the generalized variational inequality    $V I (C, B)$ is singleton for   $(u, v)$-cocoercive mappings     under appropriate assumptions on Banach spaces that extends and improves Propositions 2, 3 in [S. Saeidi, Comments on relaxed $(u, v)$-cocoercive mappings. Int. J. Nonlinear Anal. Appl. 1 (2010) No. 1, 54-57].
\end{normalsize}
\end{abstract}
\begin{normalsize}
   \textbf{keywords}:   Fixed point; Nonexpansive mapping; $(u, v)$-cocoercive mapping;  Duality mapping;   metric projection.
   \end{normalsize}
%%% ----------------------------------------------------------------------
%%% ----------------------------------------------------------------------
%\tableofcontents

\section{ Introduction}
Let $C$ be a nonempty closed   convex subset of a real normed linear  space $E$ and
$E^{*}$ be the dual space of $E$.  Suppose that  $\langle.,.\rangle$   denote the pairing between $E$ and $E^{*}$. The
normalized duality mapping $J: E \rightarrow E^{*}$
is defined by
\begin{align*}
    J(x)=\{f \in E^{*}: \langle x, f \rangle= \|x\|^{2}=\|f\|^{2} \}
\end{align*}
for each $x \in E$.  Suppose that   $U = \{x \in E : \|x\| = 1\}$.   A Banach space $E$ is called  smooth if for all $x \in U$,
there exists a unique functional $j_{x} \in E^{*}$ such that $\langle x, j_{x}\rangle = \|x\|$ and $\|j_{x}\| = 1$ ( see \cite{Ag}).

Let $C$ be a nonempty closed   convex subset of a real normed linear  space $E$.  A mapping $ T$ of $ C $ into itself is said to be  nonexpansive if $\|Tx - Ty\| \leq \|x - y\|,$ for all $x, y \in C$ and a mapping $f$ is an $\alpha$-contraction on $E $ if  $ \|f (x) -f (y)\| \leq \alpha \|x - y\|, \;x, y \in E$   and $0 \leq\alpha < 1$.

 Let $C$ be a nonempty closed convex subset of a real Hilbert space $ H $.   Suppose that  $B : C \rightarrow H $ be a nonlinear map and  $P_{C} $ be the projection of $H$ onto   $C$. The classical variational inequality problem $V I (C, B)$ is to find $u \in C $ such that
 \begin{equation}\label{23}
    \langle Bu, v - u\rangle\geq 0,
 \end{equation}
 for all $v \in C$ (see \cite{sa}). For a given $z \in H$, $u \in C$ satisfies the inequality
 \begin{equation}\label{w}
    \langle u - z, v - u\rangle\geq 0,\quad  ( v \in C),
 \end{equation}
 if and only if $u = P_{C} z$. Therefore
\begin{align*}
u \in V I(C,B) \Longleftrightarrow u = P_{C}(u -\lambda Bu),
\end{align*}
where  $\lambda > 0 $ is a constant(see \cite{sa}).   It is known that the projection operator  $P_{C}$ is nonexpansive. It is also known that  $P_{C}$  satisfies
 \begin{equation}\label{24}
    \langle x - y, P_{C} x - P_{C} y\rangle \geq \|P_{C} x - P_{C} y\|^{2},
 \end{equation}
 for $x,y \in H $.

 Let $C$ be a nonempty closed convex subset of a real Hilbert space $ H $, recall   the following definitions (see \cite{sa}):\\
 \begin{enumerate}
   \item [(i)] $B$ is called $v$-strongly monotone if
 $$\langle Bx - By\;,\; x - y\rangle\geq v\|x - y\|^{2}\qquad \text{for\: all}\quad x, y \in C,\qquad\qquad\qquad\qquad$$
 for a constant $v > 0$.
   \item [(ii)] $B$ is said to be relaxed $(u, v)$-cocoercive, if there exist two constants $u, v > 0$ such that
 \begin{equation*}
    \langle Bx - By, x - y\rangle \geq (-u)\|Bx - By\|^{2}+v\|x - y\|^{2},
 \end{equation*}
   for  all  $x, y \in C$. For $u = 0$, $B$ is $v$-strongly monotone.   Clearly, every  $v$-strongly monotone map is a relaxed $(u, v)$-cocoercive map.
 \end{enumerate}

Let $C$ be a nonempty closed convex subset of a real normed linear  space $E$. In this paper we introduce    some new definitions that   generalize the analogue   definitions from Hilbert spaces to  real normed linear  spaces. Then we      prove  that $V I (C, B)$ is singleton for   $(u, v)$-cocoercive mappings  under appropriate assumptions on Banach spaces.

\section{preliminaries}
Suppose that  $C$ be a nonempty subset of a normed space $E$ and let $x \in E$. An element
$y_{0} \in C$ is said to be a best approximation to $x$ if
$\|x - y_{0}\| = d(x,C)$,
where
\begin{equation}\label{pcd}
    d(x,C) = \inf _{y\in C} \|x - y\|.
\end{equation}
  The number $d(x,C)$ is called the distance from $x$
to $C$.

The   set of all best approximations from $x$ to $C$ is denoted
by
$P_{C}(x) = \{y \in C : \|x - y\| = d(x,C)\}$.
This defines a mapping $P_{C}$ from $E$ into $2^{C}$ and is called the metric projection
onto $C$.   $C$ is Chebyshev if $P_{C}(x)$ is singleton for each $x \in E$ and $C$ is proximinal if $P_{C}(x)\neq\emptyset$, for all $x \in E$. Every closed convex subset $C$ of a reflexive Banach space is proximinal and
every closed convex subset $C$ of a reflexive strictly convex Banach space is a Chebyshev
set. Let $C$  be a proximinal subset of a Banach space $E$, by proposition 2.10.1  in \cite{Ag}
$C$  is closed, hence Chebyshev  subsets of a Banach space $E$ are closed, too(for more detail see page 115 in \cite{Ag}).

A continuous strictly increasing function \\$\mu : \mathbb{R}^{+} \rightarrow \mathbb{R}^{+}$ is
said to be gauge function if $\mu(0) = 0$ and $\lim _{t\rightarrow\infty}\mu(t) = \infty$.\\
Let $E$ be a normed space and $E^{*}$ be it's dual space.    Let  $\mu$  be a gauge function. Then the
mapping   $J_{\mu}: E \rightarrow E^{*}$
is defined by
\begin{align*}
    J_{\mu}(x)=\{j \in X^{*}: \langle x, j \rangle= \|x\|\|j\|_{*}\, , \, \|j\|_{*}= \mu(\|x\|) \},
\end{align*}
for all $x \in E$. $J_{\mu}$  is called the duality mapping with gauge function $\mu$.
In the particular case $\mu(t) = t$, the duality mapping $J_{\mu} = J$ is   the
normalized duality mapping \cite{Ag}.
\begin{thm}\cite{Ag}\label{ju}
Let $C$ be a nonempty convex subset of a smooth Banach
space $E  $ and let $x \in E$ and $y \in C$. Then the following are equivalent:\\
(a) $y$ is a best approximation to $x$: $\|x - y\| = d(x,C)$.\\
(b) $y$ is a solution of the variational inequality:\\
$\langle y - z, J_{\mu} (x - y)\rangle \geq 0$, for all $z \in C$,
where $J_{\mu}$ is a duality mapping with gauge function $\mu$.
\end{thm}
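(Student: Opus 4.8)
The plan is to recognize (b) as the first-order optimality condition for the convex minimization problem hidden in (a), and then to translate that condition into the language of the duality mapping $J_{\mu}$. First I would reduce (a) to a minimization statement: since $\mu > 0$ on $(0,\infty)$, the primitive $\Phi(t)=\int_0^t \mu(s)\ud s$ is strictly increasing, so $y$ is a best approximation to $x$ --- i.e. $y$ minimizes $z\mapsto\|x-z\|$ over $C$ --- if and only if $y$ minimizes the convex functional $\varphi(z):=\Phi(\|x-z\|)$ over $C$. Because $C$ is convex and $\varphi$ is convex, the standard characterization of a constrained minimum of a convex functional applies: $y$ minimizes $\varphi$ over $C$ exactly when the one-sided directional derivative $\varphi'(y)[z-y]=\lim_{t\to 0^+}\frac{\varphi(y+t(z-y))-\varphi(y)}{t}$ satisfies $\varphi'(y)[z-y]\ge 0$ for every $z\in C$.

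The second step is to compute this directional derivative. Assuming first that $x\ne y$, I would invoke smoothness of $E$: the norm is Gateaux differentiable at the nonzero point $x-y$, with derivative equal to the unique support functional $f_{x-y}\in E^{*}$ determined by $\|f_{x-y}\|=1$ and $\langle x-y,\,f_{x-y}\rangle=\|x-y\|$. Writing $\varphi(y+t(z-y))=\Phi(\|(x-y)-t(z-y)\|)$ and applying the chain rule with $\Phi'=\mu$ gives $\varphi'(y)[z-y]=\mu(\|x-y\|)\,\langle y-z,\,f_{x-y}\rangle$. Since in a smooth space $J_{\mu}$ is single-valued and factors as $J_{\mu}(x-y)=\mu(\|x-y\|)\,f_{x-y}$, this directional derivative is precisely $\langle y-z,\,J_{\mu}(x-y)\rangle$, so the optimality inequality $\varphi'(y)[z-y]\ge 0$ coincides with the variational inequality in (b).

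For the equivalence itself I would close both implications through convexity. If (a) holds then $y$ minimizes $\varphi$, hence the directional derivative is nonnegative, which is (b). Conversely, if (b) holds, the subgradient inequality for the convex function $\varphi$ yields $\varphi(z)-\varphi(y)\ge\varphi'(y)[z-y]=\langle y-z,\,J_{\mu}(x-y)\rangle\ge 0$ for all $z\in C$, so $y$ minimizes $\varphi$ and (a) follows. The degenerate case $x=y$ (which forces $x\in C$) is immediate: then $d(x,C)=0$, so (a) holds, while $J_{\mu}(0)=0$ makes the inequality in (b) trivially true, so the two conditions agree there as well.

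The main obstacle I anticipate is the careful justification of the differentiation step rather than the convexity bookkeeping. I must confirm that smoothness of $E$ produces a genuine Gateaux derivative of the norm at $x-y$ equal to $f_{x-y}$, that it passes correctly through the outer function $\Phi$ via the chain rule, and that the single-valued gauge duality mapping truly factors as $J_{\mu}(w)=\mu(\|w\|)\,f_{w}$ for $w\ne 0$. Once these identifications are secured, the strict positivity of the scalar $\mu(\|x-y\|)$ shows that the sign of the optimality condition is governed entirely by the pairing with $J_{\mu}(x-y)$, and the equivalence of (a) and (b) follows.
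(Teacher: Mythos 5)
Your proposal is correct, but note that the paper itself offers no proof of this statement at all: Theorem \ref{ju} is imported verbatim from the reference \cite{Ag}, so there is nothing in the paper to compare against line by line. Your argument is a legitimate, self-contained reconstruction: the identification $J_{\mu}(w)=\mu(\|w\|)f_{w}$ for $w\neq 0$ follows directly from the definition of $J_{\mu}$ together with uniqueness of the norm-one support functional in a smooth space; the chain-rule computation of the one-sided derivative of $t\mapsto\Phi(\|(x-y)-t(z-y)\|)$ is sound because $\Phi$ is $C^{1}$ with $\Phi'=\mu$ and the norm is Gateaux differentiable at $x-y\neq 0$; and the convexity bookkeeping (difference quotients nondecreasing, hence the directional-derivative condition characterizes the minimum) is standard. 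One small point you should make explicit: the convexity of $\varphi=\Phi\circ\|x-\cdot\|$ rests on $\Phi$ being convex \emph{and nondecreasing}, which holds precisely because the gauge $\mu$ is increasing and nonnegative. It is also worth knowing that the classical proof (the one in \cite{Ag} and in Takahashi's book) splits the work differently: the implication (b)$\Rightarrow$(a) is done by the one-line estimate
\begin{align*}
\|x-y\|\,\mu(\|x-y\|)=\langle x-y, J_{\mu}(x-y)\rangle \leq \langle x-z, J_{\mu}(x-y)\rangle\leq \|x-z\|\,\mu(\|x-y\|),
\end{align*}
which requires no smoothness at all and works for any selection from $J_{\mu}(x-y)$, while smoothness enters only in (a)$\Rightarrow$(b), either through the derivative argument you give or through norm-to-weak* continuity of $J_{\mu}$ applied to $J_{\mu}(x-y_{t})$ with $y_{t}=y+t(z-y)$, $t\to 0^{+}$. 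Your route buys a unified convex-analytic picture (the variational inequality is literally the first-order optimality condition for minimizing $\Phi(\|x-\cdot\|)$); the classical route buys the sharper information that half of the equivalence is smoothness-free.
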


\section{New definitions and examples}

Let $ E $ be a real  normed linear space.   First, we  introduce the following   new definition:
\begin{de}
Let   $C$ be a nonempty  closed convex subset of a real normed linear space  $E$ and\\ $B : C \rightarrow E $ be a nonlinear map. $B$ is said to be relaxed $(u, v)$-cocoercive, if there exist two constants $u, v > 0$ such that
 \begin{equation*}
    \langle Bx - By, j(x - y)\rangle \geq (-u)\|Bx - By\|^{2}+v\|x - y\|^{2},
 \end{equation*}
   for  all $x,y\in C$ and $j(x - y)\in J(x-y)$.
\end{de}
\begin{ex}\label{ex2}
  Let $C$ be a nonempty closed convex subset of a real Hilbert space $ H $, it is well-known  that
 $B : C \rightarrow H $ is said to be relaxed $(u, v)$-cocoercive, if there exist two constants $u, v > 0$ such that
 \begin{equation*}
    \langle Bx - By, x - y\rangle \geq (-u)\|Bx - By\|^{2}+v\|x - y\|^{2},
 \end{equation*}
 for  all  $x, y \in C$.  By Example 2.4.2 in \cite{Ag}, in a Hilbert space $ H $, the normalized duality mapping is the
identity.     Then $J(x-y)=\lbrace x-y\rbrace  $. Therefore, the above definition  extends   the definition of  relaxed $(u, v)$-cocoercive mappings, from real Hilbert spaces   to real  normed linear  spaces.
  \end{ex}

 Let us to define $v$-strongly monotone mappings on real normed linear spaces, too.
 \begin{de}
Let   $C$ be a nonempty  closed convex subset of a real normed linear space  $E$ and\\ $B : C \rightarrow E $ be a nonlinear map. $B$  is called $v$-strongly monotone   if there exists     a constant $v > 0$ such that
 $$\langle Bx - By\;,\; j(x - y)\rangle\geq v\|x - y\|^{2},$$
   for  all $x,y\in C$ and $j(x - y)\in J(x-y)$.
\end{de}
\begin{ex}\label{ex3}
  Let $C$ be a nonempty closed convex subset of a real Hilbert space $ H $, it is well-known, too,  that
 $B : C \rightarrow H $ is said to be $v$-strongly monotone, if there exists  a constant $v > 0$ such that
 \begin{equation*}
    \langle Bx - By, x - y\rangle\geq v\|x - y\|^{2},
 \end{equation*}
 for  all  $x, y \in C$.
 Since $ H $ is  a Hilbert space,\\ $J(x-y)=\lbrace x-y\rbrace$. Therefore, the above definition  extends   the definition of  $v$-strongly monotone mappings, from real  Hilbert spaces to  real  normed linear spaces.
 \end{ex}

\begin{ex}\label{zendegi}
 Let $C$ be a nonempty closed convex subset of a real  Banach space $E$. Let $T$ be an $\alpha$-contraction of  $C$ into itself. Putting  $B=I-T$, we have
\begin{align*}
    \langle Bx - By,& j(x - y)\rangle\\ =& \langle (I-T)x - (I-T)y, j(x - y)\rangle\\=&
 \langle (x-y) - (Tx-Ty), j(x - y)\rangle\\=&   \langle x-y , j(x - y)\rangle- \langle Tx-Ty , j(x - y)\rangle\\\geq &
 \langle x-y , j(x - y)\rangle - \| Tx-Ty \| \|j(x - y)\|\\\geq &
 \| x-y \|^{2} - \| Tx-Ty \| \|x - y\|\\\geq &  \| x-y \|^{2} -\alpha \| x-y \|^{2}=(1-\alpha)\| x-y \|^{2},
\end{align*}
 for  all $x,y\in C$ and $j(x - y)\in J(x-y)$. Hence \\ $B : C \rightarrow E $ is a $(1-\alpha)$-strongly monotone mapping,  therefore $B$ is a relaxed $(u, (1-\alpha))$-cocoercive mapping on $E$ for each  $u >0 $.
\end{ex}
 Now, we can  concider  the following   definition that generalizes  the classical variational inequality problem  \ref{23}.
\begin{de}
 Let $ E $ be a real normed linear space and $C$ be a nonempty  closed convex subset of   $E$. Let \\$B : C \rightarrow E $ be a nonlinear map.    The      classical variational inequality problem $V I (C, B)$  is to find $u \in C $ such that
 \begin{equation}\label{vi}
    \langle j(Bu)\;,\;  v - u \rangle\geq 0,
 \end{equation}
 for all $v \in C$ and $j(Bu) \in J(Bu)$.
\end{de}
\begin{ex}\label{ex4}
 Let $C$ be a nonempty closed convex subset of a real Hilbert space $ H $     and
 $B : C \rightarrow H $   be a   mapping. Since $ H $ is a   Hilbert space,   $J(Bu)=\lbrace Bu\rbrace  $. Therefore,   \ref{vi}  generalizes    \ref{23} from   real  Hilbert spaces to real normed linear spaces.
 \end{ex}
 \begin{rem}\label{cheb}
Let $C$ be a nonempty   convex Chebyshev subset of a  real smooth Banach space $E$. Putting  $\mu(t)=t$, from   Theorem  \ref{ju}, we have
\begin{align}\label{aga}
u \in V I(C,B) \Longleftrightarrow u = P_{C}(u -\lambda Bu).
\end{align}
\end{rem}

Now, we introduce  the following new definition,
\begin{de}
   Let $C$ be a nonempty  Chebyshev    subset of  a   normed linear space  $E$ such that  $P_{C}$ be  a  metric  projection  from $E$ into $C$. Let
   $B$  be a   mapping from $C$ into $E$.  $B$ is said to be $P_{C}$-nonexpansive, if    $\|P_{C}Bx-P_{C}By\|\leq  \| Bx- By \|$.
\end{de}

\begin{ex}\label{ex5}
  Let   $C$  be a nonempty closed convex subset of    a Hilbert space $H$ and $P_{C}$, the metric projection from $H$ onto $C$ and  $B  $ a   mapping from $C$ into $H$, since by Proposition 2.10.15  in \cite{Ag},  $P_{C}$ is a nonexpansive projection, we have $\|P_{C}Bx-P_{C}By\|\leq  \| Bx- By \|$, therefore $B $ is $P_{C}$-nonexpansive.
\end{ex}

\begin{rem}
 In a Banach space, a metric projection mapping is  not nonexpansive, in general. However, the existence of nonexpansive projections from a Banach space even into a nonconvex subset $\Omega$,  is discussed  in  \cite{br}. Let $C$ is a Chebyshev  subset of a Banach space $E$ and  $B  $ a   mapping from $C$ into $E$. Therefore if a metric projection $P_{C}$ from a Banach space   into       $C$ is nonexpansive, then we have $\|P_{C}Bx-P_{C}By\|\leq  \| Bx- By \|$, therefore $B $ is $P_{C}$-nonexpansive.
\end{rem}
\begin{ex}
 Let
 $C$ be a nonempty closed convex subset of a strictly convex  and
reflexive Banach space $E$.    By Corollary 2.10.11 in \cite{Ag}, there exists a metric projection mapping $P_{C}: X \rightarrow C$ such that $P_{C}(x) = x$ for all $x \in C$.  Let  $B  $ is  a   mapping from $C$ into $C$,  therefore we have   $\|P_{C}Bx-P_{C}By\|=   \| Bx- By \|$, hence $B $ is $P_{C}$-nonexpansive.
\end{ex}
\section{Main results}
In this section, we deal with some results to prove that $V I(C,B)$ is singleton when $B : C \rightarrow E$
is a relaxed $(u, v)$-cocoercive and  $ 0<\mu$-Lipschitzian mapping and  $C$ is a nonempty convex Chebyshev   subset of a real smooth Banach space $E$.

\begin{thm}\label{jmen} Let $E$ be a Banach space, for all $x,y \in E$, we have
   $$ \langle x-y\;,\; j(x-y)\rangle\leq \langle x-y\;,\;  x^{*}-y ^{*}\rangle+4\|x\|\|y\|,$$
   for all $x^{*}\in J(x),  y ^{*} \in J(y), j(x-y) \in J(x-y)$.
\end{thm}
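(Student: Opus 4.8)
The plan is to exploit the defining identities of the normalized duality mapping and reduce everything to the triangle inequality together with the elementary duality estimate $\langle a, b^{*}\rangle \leq \|a\|\,\|b^{*}\|$. The starting observation is that the left-hand side is not an abstract quantity at all: since $j(x-y)\in J(x-y)$, the defining property of $J$ gives $\langle x-y, j(x-y)\rangle = \|x-y\|^{2}$. Thus the inequality to be proved is purely about controlling $\|x-y\|^{2}$ from above by $\langle x-y, x^{*}-y^{*}\rangle + 4\|x\|\|y\|$, and the role of the factor $4$ will become transparent once both sides are estimated.

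First I would bound the left-hand side from above. In a general Banach space one cannot expand $\|x-y\|^{2}$ through an inner product, so instead I use the triangle inequality $\|x-y\|\leq \|x\|+\|y\|$, which yields
$$\langle x-y, j(x-y)\rangle = \|x-y\|^{2} \leq \|x\|^{2} + 2\|x\|\|y\| + \|y\|^{2}.$$
Next I would produce a matching lower bound for the right-hand side. Expanding by linearity of the pairing,
$$\langle x-y, x^{*}-y^{*}\rangle = \langle x,x^{*}\rangle - \langle x,y^{*}\rangle - \langle y,x^{*}\rangle + \langle y,y^{*}\rangle.$$
Here the diagonal terms are computed exactly from the definition of $J$, namely $\langle x,x^{*}\rangle = \|x\|^{2}$ and $\langle y,y^{*}\rangle = \|y\|^{2}$. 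The two cross terms are estimated by the duality bound together with the norm identities $\|x^{*}\|=\|x\|$ and $\|y^{*}\|=\|y\|$ built into the normalized duality map: one has $\langle x,y^{*}\rangle \leq \|x\|\|y^{*}\| = \|x\|\|y\|$ and likewise $\langle y,x^{*}\rangle \leq \|x\|\|y\|$, so that $-\langle x,y^{*}\rangle - \langle y,x^{*}\rangle \geq -2\|x\|\|y\|$. Combining gives
$$\langle x-y, x^{*}-y^{*}\rangle + 4\|x\|\|y\| \geq \|x\|^{2} + \|y\|^{2} - 2\|x\|\|y\| + 4\|x\|\|y\| = \|x\|^{2} + 2\|x\|\|y\| + \|y\|^{2}.$$

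Finally I would simply chain the two estimates: the upper bound for the left-hand side and the lower bound for the right-hand side meet at the common value $\|x\|^{2} + 2\|x\|\|y\| + \|y\|^{2}$, which proves the claim. There is no serious obstacle here; the only point worth flagging is that the constant $4$ is not arbitrary but is forced by the bookkeeping, since the triangle inequality contributes a $+2\|x\|\|y\|$ to the upper estimate while the cross-term bound costs a $-2\|x\|\|y\|$ in the lower estimate, and exactly $4\|x\|\|y\|$ is needed to absorb the gap between them.
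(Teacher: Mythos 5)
Your proof is correct, and it takes a genuinely more elementary route than the paper's. Every step checks out: for $j(x-y)\in J(x-y)$ the definition of the normalized duality mapping gives $\langle x-y,\, j(x-y)\rangle=\|x-y\|^{2}$, the triangle inequality bounds this by $(\|x\|+\|y\|)^{2}$, and your bilinear expansion of $\langle x-y,\, x^{*}-y^{*}\rangle$, using $\langle x,x^{*}\rangle=\|x\|^{2}$, $\langle y,y^{*}\rangle=\|y\|^{2}$, $\|x^{*}\|=\|x\|$, $\|y^{*}\|=\|y\|$ and the duality estimate on the cross terms, yields the lower bound $(\|x\|-\|y\|)^{2}$; since $(\|x\|+\|y\|)^{2}-(\|x\|-\|y\|)^{2}=4\|x\|\|y\|$, the two estimates meet exactly as you claim. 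The paper proceeds differently: it quotes Theorem 4.2.4 of Takahashi's book for the sharper lower bound
\begin{align*}
\langle x-y,\, x^{*}-y^{*}\rangle \geq (\|x\|-\|y\|)^{2}+(\|x\|+\|y\|)\bigl(\|x\|+\|y\|-\|x+y\|\bigr),
\end{align*}
and then spends several lines discarding the second (nonnegative) term through triangle-inequality manipulations involving $\|x+y\|$, arriving at $\|x-y\|^{2}-4\|x\|\|y\|$; it also splits off the case $x=y$ separately, which your argument does not need. The extra strength of the cited inequality is never exploited, so your argument effectively inlines exactly the part of that theorem that matters, namely the cross-term estimates $\langle x,y^{*}\rangle\leq\|x\|\|y\|$ and $\langle y,x^{*}\rangle\leq\|x\|\|y\|$. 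What your version buys is a proof that is shorter, self-contained (no external citation), and one in which the constant $4$ is visibly forced as the gap $(\|x\|+\|y\|)^{2}-(\|x\|-\|y\|)^{2}$; what the paper's version buys is a connection to a standard known inequality, at the cost of an opaque chain of estimates.
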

\begin{proof}
   Let $x= y$, obviously the inequality holds. Let  $x^{*}\in J(x),  y ^{*} \in J(y) $ and $x\neq y$. As in the proof of Theorem 4.2.4 in \cite{tn}, we have
    \begin{align*}
        \langle x-y\;,\;  x^{*}-&y ^{*}\rangle \\ \geq & (\|x\|-\|y\|)^{2}\\&+ (\|x\|+\|y\|)(\|x\|+\|y\|-\|x+y\|).
    \end{align*}
    Hence, we have
    \begin{align*}
        \langle x-y\;,\;  x^{*}-&y ^{*}\rangle \\ \geq & (\|x\|-\|y\|)^{2}\\&+ (\|x\|+\|y\|)(\|x\|+\|y\|-\|x+y\|)\\=& (\|x\|-\|y\|)^{2}\\&+ (\|x\|+\|y\|)^{2}-\|x+y\| (\|x\|+\|y\|)\\ \geq &
        (\|x\|-\|y\|)^{2}\\&+ \|x-y\|^{2}- (\|x\|+\|y\|)^{2}
         \\ = & \|x-y\|^{2}-4\|x\|\|y\|\\=&
        \langle x-y\;,\; j(x-y)\rangle -4\|x\|\|y\|,
        \end{align*}
        therefore,
           $$ \langle x-y\;,\; j(x-y)\rangle\leq \langle x-y\;,\;  x^{*}-y ^{*}\rangle+4\|x\|\|y\|.$$
\end{proof}

\begin{thm}\label{thm1}
  Let $C$ be a nonempty   convex Chebyshev   subset of a real smooth Banach space $E$.  Suppose that $\mu, v, u$ be real numbers such that $\mu>0 $,  and $v > u\mu^{2}+5\mu$. Let  $B: C \rightarrow E$
be a relaxed $(u, v)$-cocoercive and  $ \mu$-Lipschitzian mapping. Let $P_{C}$ be a metric projection mapping from $E$ into $C$  such that $I-\lambda B $ be a $P_{C}$-nonexpansive mapping,  for each $\lambda>0$.
    Then
$V I(C,B)$ is singleton.
\end{thm}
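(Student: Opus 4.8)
The plan is to realize the solutions of $VI(C,B)$ as the fixed points of a single self-map of $C$ and then invoke the Banach contraction principle. Fix $\lambda>0$ (to be chosen later) and define $G\colon C\to C$ by $Gx=P_{C}(x-\lambda Bx)$; this is well defined and single valued because $C$ is Chebyshev, and it maps $C$ into $C$ because $P_{C}$ does. By Remark \ref{cheb} (applied with the gauge $\mu(t)=t$, using that $E$ is smooth so that $J$ is single valued), a point $u\in C$ solves $VI(C,B)$ if and only if $u=P_{C}(u-\lambda Bu)=Gu$. Hence it suffices to prove that for a suitable $\lambda>0$ the map $G$ is a strict contraction: then $G$ has a unique fixed point in the complete metric space $C$ (recall that Chebyshev subsets of a Banach space are closed, hence complete), and that fixed point is the unique element of $VI(C,B)$.

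First I would use the hypothesis that $I-\lambda B$ is $P_{C}$-nonexpansive to remove the projection: for $x,y\in C$,
\[
\|Gx-Gy\|=\|P_{C}(I-\lambda B)x-P_{C}(I-\lambda B)y\|\le\|(x-y)-\lambda(Bx-By)\|.
\]
So everything reduces to estimating $\|(x-y)-\lambda(Bx-By)\|^{2}$, and this is where the Banach-space geometry enters. The crux is to apply Theorem \ref{jmen} with the substitution $x\mapsto x-y$ and $y\mapsto\lambda(Bx-By)$, so that the difference $(x-y)-\lambda(Bx-By)$ plays the role of ``$x-y$'' there. This yields
\[
\|(x-y)-\lambda(Bx-By)\|^{2}\le\langle (x-y)-\lambda(Bx-By),\,(x-y)^{*}-\lambda(Bx-By)^{*}\rangle+4\lambda\|x-y\|\,\|Bx-By\|,
\]
where $(x-y)^{*}\in J(x-y)$ and $(Bx-By)^{*}\in J(Bx-By)$ (using $J(\lambda w)=\lambda J(w)$ for $\lambda>0$). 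The whole point of choosing these arguments is that the error term $4\|x\|\|y\|$ of Theorem \ref{jmen} becomes $4\lambda\|x-y\|\,\|Bx-By\|\le 4\lambda\mu\|x-y\|^{2}$; that is, it scales like $\|x-y\|^{2}$ rather than remaining an uncontrolled additive constant.

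Next I would expand the pairing and bound each piece. Against $(x-y)^{*}$ the relaxed $(u,v)$-cocoercivity gives $\langle Bx-By,(x-y)^{*}\rangle\ge -u\|Bx-By\|^{2}+v\|x-y\|^{2}$, which produces the favorable $-\lambda v\|x-y\|^{2}$ together with $\lambda u\mu^{2}\|x-y\|^{2}$ after the $\mu$-Lipschitz bound $\|Bx-By\|\le\mu\|x-y\|$. Against $\lambda(Bx-By)^{*}$ the estimate $\langle x-y,(Bx-By)^{*}\rangle\le\|x-y\|\,\|Bx-By\|\le\mu\|x-y\|^{2}$ contributes $\lambda\mu\|x-y\|^{2}$, and the remaining $\lambda^{2}\|Bx-By\|^{2}\le\lambda^{2}\mu^{2}\|x-y\|^{2}$ appears as well. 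Collecting everything (the $4\lambda\mu$ from the error term and the $\lambda\mu$ from this last cross term combine to $5\lambda\mu$) I obtain
\[
\|Gx-Gy\|^{2}\le\bigl(1-\lambda(v-u\mu^{2}-5\mu-\lambda\mu^{2})\bigr)\|x-y\|^{2}.
\]
Because $v>u\mu^{2}+5\mu$, the number $v-u\mu^{2}-5\mu$ is positive, so every $\lambda$ with $0<\lambda<(v-u\mu^{2}-5\mu)/\mu^{2}$ makes the coefficient strictly less than $1$; fixing such a $\lambda$ small enough that the coefficient also lies in $[0,1)$ renders $G$ a contraction and finishes the proof. The main obstacle, and the only genuinely nontrivial step, is this norm estimate: in a general smooth Banach space the clean Hilbert identity for $\|(x-y)-\lambda(Bx-By)\|^{2}$ is unavailable, and Theorem \ref{jmen} is precisely the device that replaces it while keeping the correction proportional to $\|x-y\|^{2}$. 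It is exactly the magnitude of that correction that forces the sharper threshold $v>u\mu^{2}+5\mu$ in place of the Hilbert-space condition $v>u\mu^{2}$.
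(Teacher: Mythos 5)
Your proof is correct and takes essentially the same route as the paper's: the same removal of the projection via the $P_{C}$-nonexpansiveness hypothesis, the same key application of Theorem \ref{jmen} with the substitution $x \mapsto x-y$, $y \mapsto \lambda(Bx-By)$, the same use of cocoercivity and the $\mu$-Lipschitz bound to arrive at the coefficient $1-\lambda\left(v-u\mu^{2}-5\mu-\lambda\mu^{2}\right)$, and the same conclusion via Banach's contraction principle combined with Remark \ref{cheb}. If anything, you are slightly more explicit than the paper about the homogeneity $J(\lambda w)=\lambda J(w)$ and the completeness of $C$ needed for the fixed-point argument.
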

\begin{proof}
Let $\lambda$ be a real number such that
\begin{equation}
    0<\lambda<\frac{v-u\mu^{2}-5\mu}{\mu^{2}}, \quad \lambda   \mu^{2}[\frac{v-u\mu^{2}-5\mu}{\mu^{2}}- \lambda ]<1.
\end{equation}
    Then, by Theorem \ref{jmen},  for every $x, y \in C$, we have
    \begin{align*}
        \|P_{C}&(I -\lambda B)x- P_{C}(I -\lambda B)y\|^{2}\\ \leq &
        \|(I -\lambda B)x- (I -\lambda B)y\|^{2}\\ = &
        \|(x-y)-  \lambda (Bx- By)\|^{2}\\ = &
        \|j[(x-y)-  \lambda (Bx- By)]\|^{2}\\ = &
        \langle (x-y)-  \lambda (Bx- By),j[(x-y)\\&-  \lambda (Bx- By)]\rangle \\ \leq &
        \langle x-y -\lambda (Bx- By)  \;,\; j(x-y)\\&-\lambda j(Bx- By)\rangle\\&+4\lambda \|x-y \| \|Bx- By \|\\=&
        \langle x-y   \;,\; j(x-y)\rangle\\&-\lambda \langle  Bx- By   \;,\; j(x-y)\rangle \\&+\lambda \langle y-x  \;,\;   j(Bx- By)\rangle \\&+ \lambda ^{2} \langle  (Bx- By)  \;,\;  j(Bx- By)\rangle \\&+4\lambda \|x-y \| \|Bx- By \|\\\leq &
        \|x-y\|^{2}+\lambda u  \|Bx- By\|^{2}- \lambda v \|x-y\|^{2}\\&+ \lambda^{2} \|Bx- By\|^{2}+5\lambda \|x-y \| \|Bx- By \|\\ \leq &
        \|x-y\|^{2}+\lambda u  \mu^{2}\|x- y\|^{2}- \lambda v \|x-y\|^{2}\\&+ \lambda^{2}\mu^{2} \|x-y\|^{2}+5\lambda \mu \|x-y \| ^{2}\\ \leq &
        \Big(1+\lambda u  \mu^{2}- \lambda v+ \lambda^{2}\mu^{2} +5\lambda \mu \Big)\|x-y\| ^{2}\\\leq &
        \Big(1-\lambda   \mu^{2}[\frac{v-u\mu^{2}-5\mu}{\mu^{2}}- \lambda ] \Big)\|x-y\| ^{2}
    \end{align*}
    Now, since $1-\lambda   \mu^{2}[\frac{v-u\mu^{2}-5\mu}{\mu^{2}}- \lambda ] < 1$, the mapping \\$P_{C}(I -\lambda B) : C \rightarrow C$ is a contraction
and Banach's Contraction Mapping Principle guarantees that it has a unique fixed
point  $  u$; i.e., $P_{C}(I -\lambda B)u = u$, which is the unique solution of $V I(C,B)$ by  \ref{aga}.
\end{proof}
  We can conclude  Proposition 2 in \cite{sa} for $v > u\mu^{2}+5\mu$.
\begin{co}([4, Proposition 2]) Let $C$ be a nonempty closed convex subset of a Hilbert space  $H$ and let \\$B : C \rightarrow H$
be a relaxed $(u, v)$-cocoercive and  $ 0<\mu$-Lipschitzian mapping such that $v > u\mu^{2}+5\mu$. Then
$V I(C,B)$ is singleton.

\end{co}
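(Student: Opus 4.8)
The plan is to show that for a suitably chosen $\lambda > 0$ the mapping $P_C(I - \lambda B) : C \to C$ is a strict contraction, and then to invoke Banach's Contraction Mapping Principle together with the equivalence \ref{aga} from Remark \ref{cheb}. Since $C$ is Chebyshev, $P_C$ is single-valued and maps $E$ into $C$, while $I - \lambda B$ maps $C$ into $E$; hence $P_C(I - \lambda B)$ is a well-defined self-map of $C$, and its unique fixed point $u$ will, by \ref{aga}, be the unique point of $V I(C,B)$.

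First I would fix $\lambda$ in the window $0 < \lambda < \frac{v - u\mu^2 - 5\mu}{\mu^2}$ subject to the additional requirement $\lambda\mu^2\big[\frac{v-u\mu^2-5\mu}{\mu^2} - \lambda\big] < 1$; the hypothesis $v > u\mu^2 + 5\mu$ guarantees that the upper endpoint is positive, so this window is nonempty. I would then estimate $\|P_C(I-\lambda B)x - P_C(I-\lambda B)y\|^2$. The $P_C$-nonexpansiveness of $I - \lambda B$ lets me discard the projection and reduce to $\|(x-y) - \lambda(Bx - By)\|^2$. Writing this squared norm as the pairing $\langle (x-y)-\lambda(Bx-By),\, j[(x-y)-\lambda(Bx-By)]\rangle$, using that with $\mu(t)=t$ one has $\|j(z)\| = \|z\|$ and $\langle z, j(z)\rangle = \|z\|^2$, sets me up to linearize.

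The crucial step, and the main obstacle, is that in a general Banach space $J$ is not linear, so the Hilbert-space expansion of the squared norm is unavailable; this is overcome by applying Theorem \ref{jmen} with the splitting $a = x - y$ and $b = \lambda(Bx - By)$. This replaces $\langle a - b, j(a-b)\rangle$ by $\langle a - b, a^* - b^*\rangle + 4\|a\|\|b\|$, where by positive homogeneity of the duality map one may take $a^* = j(x-y)$ and $b^* = \lambda\,j(Bx-By)$, and $4\|a\|\|b\| = 4\lambda\|x-y\|\|Bx-By\|$. Expanding the pairing by bilinearity produces four terms, bounded individually: $\langle x-y, j(x-y)\rangle = \|x-y\|^2$; the term $-\lambda\langle Bx-By, j(x-y)\rangle$ is controlled by the relaxed $(u,v)$-cocoercive inequality; the cross term $\lambda\langle y-x, j(Bx-By)\rangle$ is bounded by $\lambda\|x-y\|\|Bx-By\|$ via the Cauchy--Schwarz-type estimate for the pairing together with $\|j(Bx-By)\| = \|Bx-By\|$; and $\lambda^2\langle Bx-By, j(Bx-By)\rangle = \lambda^2\|Bx-By\|^2$. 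The stray $\lambda\|x-y\|\|Bx-By\|$ then combines with the extra $4\lambda\|x-y\|\|Bx-By\|$ coming from Theorem \ref{jmen} into $5\lambda\|x-y\|\|Bx-By\|$, which is precisely the source of the constant $5$ in the hypothesis $v > u\mu^2 + 5\mu$.

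Finally I would invoke $\mu$-Lipschitzianity, $\|Bx-By\| \leq \mu\|x-y\|$, to convert every occurrence of $\|Bx-By\|$ into a multiple of $\|x-y\|$, collecting everything into $\big(1 + \lambda u\mu^2 - \lambda v + \lambda^2\mu^2 + 5\lambda\mu\big)\|x-y\|^2$, and then rewrite the coefficient as $1 - \lambda\mu^2\big[\frac{v-u\mu^2-5\mu}{\mu^2} - \lambda\big]$. By the choice of $\lambda$ this coefficient lies in $(0,1)$, so its square root is a genuine contraction constant. Banach's theorem then yields a unique fixed point $u = P_C(u - \lambda Bu)$, and \ref{aga} identifies the fixed-point set of $P_C(I-\lambda B)$ with $V I(C,B)$; hence $V I(C,B)$ is a singleton. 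The only points needing care are the well-definedness of the self-map $P_C(I-\lambda B)$ on $C$ and the nonemptiness of the admissible $\lambda$-window, both immediate from the Chebyshev hypothesis and from $v > u\mu^2 + 5\mu$.
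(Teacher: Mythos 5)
Your proof is correct and is essentially the paper's own argument: the paper obtains this corollary as a direct specialization of Theorem \ref{thm1} (in a Hilbert space, $H$ is smooth, every closed convex set is Chebyshev, and $P_{C}$ is nonexpansive, so $I-\lambda B$ is automatically $P_{C}$-nonexpansive), and the contraction argument you give is precisely the paper's proof of that theorem, with the same $\lambda$-window, the same application of Theorem \ref{jmen} to the splitting $x-y$ and $\lambda(Bx-By)$, and the same contraction constant $1-\lambda\mu^{2}\bigl[\frac{v-u\mu^{2}-5\mu}{\mu^{2}}-\lambda\bigr]$. The only cosmetic difference is that you inline the general Banach-space proof rather than citing the theorem, so nothing of substance is changed.
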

\begin{rem}
S.  Saeidi, in the proof of  Proposition 2  in  \cite{sa}          proves  that
\begin{align*}
 \|P_{C}&(I -  s A)x- P_{C}(I -s A)y\|^{2} \\& \leq  \Big(1-s   \mu^{2}[\frac{2(r-\gamma \mu^{2})}{\mu^{2}}- s ] \Big)\|x-y\| ^{2}
\end{align*}
when $ 0< s<\frac{2(r-\gamma \mu^{2})}{\mu^{2}}$ and $ r>\gamma \mu^{2}$. Putting \\$ r=\gamma=s=1 $ and $ \mu=\frac{1}{10} $ we have \\$\Big(1-s   \mu^{2}[\frac{2(r-\gamma \mu^{2})}{\mu^{2}}- s ] \Big)<0  $ that is a contradiction. We correct  this contradiction in the proof of theorem \ref{thm1}.

\end{rem}
Since $v$-strongly monotone mappings are relaxed $(u, v)$-cocoercive, we conclude  the following theorem:
\begin{thm}\label{thm2}
    Let $C$ be a nonempty   convex Chebyshev   subset of a real smooth  Banach space $E$.  Suppose that $\mu, v$ be real numbers such that $\mu>0 $  such that  $v > 5\mu$.
Let  $B: C \rightarrow E$
be a  $v$-strongly monotone,  $ \mu$-Lipschitzian mapping. Let $P_{C}$ be a metric projection mapping from $E$ into $C$  such that $I-\lambda B $ be a $P_{C}$-nonexpansive mapping,  for each $\lambda>0$.
    Then
$V I(C,B)$ is singleton.
\end{thm}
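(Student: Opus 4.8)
The plan is to obtain Theorem \ref{thm2} as an immediate consequence of Theorem \ref{thm1}. The guiding observation, already isolated in Example \ref{zendegi}, is that a $v$-strongly monotone mapping is automatically relaxed $(u,v)$-cocoercive for \emph{every} positive $u$; once this is in hand, it suffices to pick $u$ small enough that the hypothesis $v > u\mu^{2}+5\mu$ of Theorem \ref{thm1} is satisfied under the weaker assumption $v > 5\mu$.

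First I would record the elementary reduction. Assume $B$ is $v$-strongly monotone. Then for all $x,y \in C$ and every $j(x-y)\in J(x-y)$,
$$\langle Bx - By\;,\; j(x-y)\rangle \geq v\|x-y\|^{2} \geq (-u)\|Bx-By\|^{2} + v\|x-y\|^{2},$$
because $(-u)\|Bx-By\|^{2}\leq 0$ whenever $u>0$. Hence $B$ is relaxed $(u,v)$-cocoercive for each $u>0$, with the \emph{same} constant $v$.

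Next I would choose the free parameter $u$. Since the hypothesis gives $v > 5\mu$ with $\mu>0$, the quantity $v-5\mu$ is strictly positive, so the interval $\bigl(0,\tfrac{v-5\mu}{\mu^{2}}\bigr)$ is nonempty and I may fix any $u$ inside it, for instance $u=\tfrac{v-5\mu}{2\mu^{2}}$. For such a $u$ one has $u\mu^{2} < v-5\mu$, that is $v > u\mu^{2}+5\mu$, which is exactly the inequality required by Theorem \ref{thm1}.

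Finally, with this value of $u$ all hypotheses of Theorem \ref{thm1} are in force: $B$ is relaxed $(u,v)$-cocoercive and $\mu$-Lipschitzian on the convex Chebyshev subset $C$ of the smooth Banach space $E$, the map $P_{C}$ is a metric projection, and $I-\lambda B$ is $P_{C}$-nonexpansive for each $\lambda>0$. Theorem \ref{thm1} then guarantees that $V I(C,B)$ is a singleton, which completes the argument. There is no genuine obstacle here; the only point meriting a line of verification—and the step I would flag as the crux—is that the admissible range for $u$ is nonempty, which follows at once from $v>5\mu$ and $\mu>0$.
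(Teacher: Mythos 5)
Your proposal is correct and follows exactly the route the paper takes: the paper derives Theorem \ref{thm2} from Theorem \ref{thm1} via the one-line remark that $v$-strongly monotone mappings are relaxed $(u,v)$-cocoercive. Your write-up is in fact more careful than the paper's, since you make explicit the choice of a sufficiently small $u$ (any $u<\tfrac{v-5\mu}{\mu^{2}}$) needed to satisfy the hypothesis $v>u\mu^{2}+5\mu$, a step the paper leaves implicit.
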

We can conclude  Proposition 3 in \cite{sa} for $v > 5\mu$:
\begin{co}([4, Proposition 3]) Let $C$ be a nonempty closed convex subset of a Hilbert space  $H$ and let \\$B : C \rightarrow H$
be a  $v$-strongly monotone and $ 0<\mu$-Lipschitzian mapping such that  $v > 5\mu$. Then
$V I(C,B)$ is singleton.

\end{co}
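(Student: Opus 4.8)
The plan is to obtain Theorem \ref{thm2} as an immediate consequence of Theorem \ref{thm1}, by exploiting the fact already isolated in Example \ref{zendegi} that a $v$-strongly monotone mapping is relaxed $(u,v)$-cocoercive for \emph{every} $u>0$, and then choosing $u$ small enough that the sharper hypothesis $v>u\mu^2+5\mu$ of Theorem \ref{thm1} is satisfied. In this way no new calculation is required: the entire weight of the argument is carried by Theorem \ref{thm1}.

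First I would record the elementary inclusion between the two classes of mappings. If $B$ is $v$-strongly monotone, then for any $u>0$, all $x,y\in C$, and all $j(x-y)\in J(x-y)$,
$$\langle Bx-By, j(x-y)\rangle \geq v\|x-y\|^2 \geq (-u)\|Bx-By\|^2 + v\|x-y\|^2,$$
since the term $-u\|Bx-By\|^2$ is nonpositive. Hence $B$ is relaxed $(u,v)$-cocoercive for every $u>0$, exactly as observed in the special case of Example \ref{zendegi}.

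Next, I would use the standing assumptions $\mu>0$ and $v>5\mu$ to select a constant $u>0$ with $u<\dfrac{v-5\mu}{\mu^2}$; such a $u$ exists precisely because $v>5\mu$ forces the right-hand side to be strictly positive. For this $u$ the inequality $u\mu^2<v-5\mu$ rearranges to $v>u\mu^2+5\mu$, which is the threshold condition appearing in Theorem \ref{thm1}.

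Finally, for this choice of $u$ the mapping $B$ meets every hypothesis of Theorem \ref{thm1}: it is relaxed $(u,v)$-cocoercive and $\mu$-Lipschitzian, $C$ is a convex Chebyshev subset of a real smooth Banach space, $I-\lambda B$ is $P_{C}$-nonexpansive for each $\lambda>0$, and $v>u\mu^2+5\mu$. Invoking Theorem \ref{thm1} then yields that $V I(C,B)$ is a singleton. I expect no genuine obstacle here; the only step that actually requires $v>5\mu$ (rather than merely $v>0$) is the guarantee in the third step that a positive $u$ with $v>u\mu^2+5\mu$ can be found, and this is immediate once the right-hand side $\dfrac{v-5\mu}{\mu^2}$ is seen to be positive.
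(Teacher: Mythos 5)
Your core reduction is correct, and it is exactly the route the paper takes: a $v$-strongly monotone map is relaxed $(u,v)$-cocoercive for every $u>0$ (the paper notes this when deducing Theorem \ref{thm2}, and in a special case in Example \ref{zendegi}), and since $v>5\mu$ you may pick $0<u<\frac{v-5\mu}{\mu^{2}}$, so that $v>u\mu^{2}+5\mu$ and Theorem \ref{thm1} applies. In fact you make explicit the ``choose $u$ small enough'' step that the paper passes over in silence.

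The gap is that what you have actually proved is Theorem \ref{thm2}, not the stated corollary. The corollary is set in a Hilbert space, and its hypotheses do not include that $C$ is Chebyshev, that the ambient space is a smooth Banach space, or that $I-\lambda B$ is $P_{C}$-nonexpansive; yet in your final paragraph you list these among the hypotheses that ``are met,'' treating them as given. For the corollary they are not given --- they must be derived from the Hilbert structure before Theorem \ref{thm1} (equivalently Theorem \ref{thm2}) can be invoked, and this derivation is precisely what distinguishes the corollary from Theorem \ref{thm2}. All three verifications are standard and are recorded in the paper: a Hilbert space is smooth, since its normalized duality mapping is the identity and in particular single-valued (Example \ref{ex2}); a Hilbert space is reflexive and strictly convex, so every nonempty closed convex subset is Chebyshev (Section 2); and the metric projection of a Hilbert space onto a closed convex set is nonexpansive, so any mapping from $C$ into $H$ --- in particular $I-\lambda B$ for every $\lambda>0$ --- is $P_{C}$-nonexpansive (Example \ref{ex5}). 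Adding these three observations closes the gap, and the resulting proof coincides with the paper's.
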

%\section{Competing interests}
%The authors declare that they have no competing interests.
\end{large}

%-----------------------------------------------------------------------------
%-----------------------------------------------------------------------------

\bigskip
\bigskip

%{\bf Received: Month xx, 200x}

\end{document}